\newcommand{\ud}{\mathrm{d}}
\newcommand{\cH}{\mathcal{H}}
\newcommand{\ran}{\mathrm{ran}\,}
\newcommand{\C}{\mathbb C}
\newcommand{\N}{\mathbb N}
\newcommand{\1}{\mathbbm{1}}
\newcommand{\0}{\mathbb{O}}
\newcommand{\mathspan}[1]{\mathrm{span}\left\{#1 \right\}}
\newcommand{\Bop}{\mathscr{B}(\cH)}
\newcommand{\specmeas}[1]{E^{(#1)}}
\newcommand{\mug}[2]{\mu_{#2}^{(#1)}}
\newcommand{\K}[2]{\mathcal{K}\left(#1,#2\right)}
\newcommand{\Kc}[2]{\overline{\K{#1}{#2}}}
\newcommand{\scalar}[2]{\left\langle #1, #2 \right\rangle}
\newcommand{\brahket}[2]{\left| #1 \right \rangle \left \langle #2 \right|}
\newcommand{\norm}[2]{\left\Vert #1 \right\Vert_{#2}}
\newcommand{\Hnorm}[1]{\norm{#1}{\cH}}
\newcommand{\opnorm}[1]{\left\Vert #1 \right\Vert_{\mathrm{op}}}
\newcommand{\res}[2]{\mathcal{R}\left(#1, #2\right)} %resolvent operator
\theoremstyle{plain}
\newtheorem{theorem}{Theorem}[section]
\newtheorem{corollary}[theorem]{Corollary}
\newtheorem{proposition}[theorem]{Proposition}
\theoremstyle{definition}
\newtheorem{definition}[theorem]{Definition}
\newtheorem{remark}[theorem]{Remark}
\numberwithin{equation}{section}
\begin{document}

\title[]
{A note on the Krylov solvability of compact normal operators on Hilbert space}
\author[N.~Caruso]{No\`{e} Angelo Caruso}
\address[N.~Caruso]{Mathematical Institute in Opava, Silesian University in Opava \\ Na Rybnicku 626/1 \\ 74601 Opava (Czech Republic).}
\email{noe.caruso@math.slu.cz}

\makeatletter{\renewcommand*{\@makefnmark}{}
\footnotetext{Complex Analysis and Operator Theory (2023).}\makeatother}

\subjclass[2020]{47B15, 47B02, 47A52, 47A16, 47N40}
\keywords{inverse linear problems, infinite-dimensional Hilbert space, ill-posed problems, compact operators, bounded linear operators, normal operators, Krylov subspaces, cyclic operators, Krylov solution, Krylov solvability.
}

\makeatletter{\renewcommand*{\@makefnmark}{}
\footnotetext{This version of the article has been accepted for publication, after peer review (when applicable)
but is not the Version of Record and does not reflect post-acceptance improvements, or any
corrections. The Version of Record is available online at: https://doi.org/10.1007/s11785-023-01413-0\\
The author gratefully acknowledges the support provided by the Italian National Institute of Higher Mathematics INdAM.
}\makeatother}

\date{\today}

\begin{abstract}
We analyse the Krylov solvability of inverse linear problems on Hilbert space $\cH$ where the underlying operator is compact and normal. Krylov solvability is an important feature of inverse linear problems that has profound implications in theoretical and applied numerical analysis as it is critical to understand the utility of Krylov based methods for solving inverse problems. Our results explicitly describe for the first time the Krylov subspace for such operators given any datum vector $g\in\cH$, as well as prove that all inverse linear problems are Krylov solvable provided that $g$ is in the range of such an operator. We therefore expand our knowledge of the class of Krylov solvable operators to include the normal compact operators. We close the study by proving an isomorphism between the closed Krylov subspace for a general bounded normal operator and an $L^2$-measure space based on the scalar spectral measure.
\end{abstract}

\maketitle

\section{Introduction}

The question of `Krylov solvability' of inverse linear problems is operator theoretic with deep roots in numerical applications and profound implications for the use of Krylov based methods to solve inverse linear problems. Recently this phenomenon has been studied for both bounded and unbounded operators \cite{CMN-2018_Krylov-solvability-bdd,CM-2019_ubddKrylov,CM-Nemi-unbdd-2019,CM-book-Krylov-2022}, after having received some past attention in the bounded setting \cite{Nemirovskiy-Polyak-1985,Nemirovskiy-Polyak-1985-II,Engl-Hanke-Neubauer-1996,Hanke-ConjGrad-1995,Karush-1952,Campbell-Ipsen-1996-I,Campbell-Ipsen-1996-II,CN-2018}.

In general one has a linear operator $A:\cH \to \cH$ acting on some Hilbert space $\cH$, that in many applications represents some physical law, and a datum vector $g \in \ran A$ that represents some measurable output. The inverse linear problem is formulated as
\begin{equation}\label{eq:lin_inv}
Af = g\,, \quad g \in \ran A\,,
\end{equation}
where $f \in \cH$ is a solution to the problem that in applications is often a-priori unknown. We call the problem \eqref{eq:lin_inv} \emph{solvable} as $g \in \ran A$. If $A$ is injective, we call \eqref{eq:lin_inv} \emph{well-defined}, and if additionally $A^{-1} \in \Bop$ we call the problem \eqref{eq:lin_inv} \emph{well-posed}. 

The question of `Krylov solvability' becomes relevant in applications when one attempts to solve \eqref{eq:lin_inv} by means of the very popular and celebrated family of Krylov algorithms that search for solution(s) to \eqref{eq:lin_inv} in the distinguished Krylov subspace. Therefore, one naturally wants to know whether a solution(s) $f \in \cH$ is approximable by such vectors in the Krylov subspace
\begin{equation}\label{eq:Krylov_subspace}
\mathcal{K}(A,g) := \mathrm{span}\{A^kg \,|\, k \in \N_0\}\,,
\end{equation}
or in other words, whether there exists a solution to \eqref{eq:lin_inv} $f \in \overline{\mathcal{K}(A,g)}$. Such a solution we call a \emph{Krylov solution}, and an inverse linear problem that exhibits such an occurrence we call \emph{Krylov solvable}. The practical advantage of Krylov solvable inverse linear problems is that one may construct a solution(s) to \eqref{eq:lin_inv} using the easy-to-compute vectors $g, Ag, A^2g, \dots$. Of course, the critical importance of having such knowledge a-priori is that one may decide whether a given problem is indeed a suitable candidate for treatment using a Krylov based algorithm before it is used.

The above question is already well-understood and under good control in the finite-dimensional setting, and is treated in several well-known monographs \cite{Saad-2003_IterativeMethods,Liesen-Strakos-2003,golub-vanloan-book-1996}. To a lesser extent the question of Krylov solvability has been studied in several past works in the infinite-dimensional setting for bounded operators, most of which choose to remain within a particular class of operators (e.g., positive self-adjoint), using specific Krylov algorithms (e.g., GMRES, MINRES, CG, LSQR). Recently, the problem has been studied using operator-theoretic techniques in the infinite-dimensional setting, including unbounded operators \cite{CM-2019_ubddKrylov,CM-Nemi-unbdd-2019,Olver-2009}, and has also resulted in a recent monograph on the topic \cite{CM-book-Krylov-2022}. 

In this work we choose to remain in the abstract infinite-dimensional Hilbert space setting where, innocent as the question of Krylov solvability may seem, there are several results that appear un-intuitive when coming from a finite-dimensional analysis perspective. (Indeed, several good examples of Krylov solvability, or lack thereof, of inverse linear problems may be found in \cite[E.g.~3.1]{CMN-2018_Krylov-solvability-bdd}.) 

One strategy to confront certain difficulties that naturally arise in the infinite-dimensional setting has been to identify certain classes of operators that have favourable Krylov solvability properties. In a previous study \cite{CMN-2018_Krylov-solvability-bdd} we were able to identify that the bounded self-adjoint operators always give rise to Krylov-solvable inverse linear problems, and we also identified a new class of operators that we called the `$\mathscr{K}$-class' that also always exhibit Krylov solvability. We recently expanded our study of the $\mathscr{K}$-class under the effects of perturbations in \cite{CM-krylov-perturbation-2021}. These operator classes just described in fact belong to the larger class of \emph{Krylov solvable operators}, i.e., the collection of linear operators on $\cH$ that \emph{always} admit a Krylov solution to \eqref{eq:lin_inv} given \emph{any} $g$ in the range of the operator.

Here we expand our knowledge of the class of Krylov solvable operators by proving that the compact normal operators on Hilbert space \emph{always} belong to this class. The analysis that permits us to conclude such a result is based primarily on the functional calculus for bounded operators on Banach space (see, for example \cite{Riesz-Nagy_FA-1955_Eng,Kato-perturbation,Rudin-functionalanalysis,Beauzamy-book-1988}), and the canonical decomposition of compact normal operators \cite{Beauzamy-book-1988,Kato-perturbation}. Moreover, we are able to explicitly describe the Krylov subspace in terms of the datum $g$ and the projection operators onto the eigenspaces of $A$. 

We begin this note with a preparatory theorem in Section~\ref{sec:prep_thm} (Theorem~\ref{th:1}) before moving on to the analysis specific to compact normal operators in Section~\ref{sec:compact_normal} (Propositions~\ref{prop:compactnormalKrylov}, \ref{prop:compactnormal_Ksolvable}, and \ref{prop:compactnormal_Kreduced}), and finally in Section~\ref{sec:normal_bdd} we close with two theorems for general bounded normal operators (Theorems \ref{th:normalreduced_isomorphic} and \ref{th:converse_normalreduced_isomorphic}). 

\subsection*{Notation}
Throughout this note $\cH$ denotes an abstract Hilbert space with scalar product $\scalar{\cdot}{\cdot}$ antilinear in the first argument, and norm $\Hnorm{\,}$. We use $\brahket{\varphi}{\psi}$, for $\varphi,\psi \in \cH$, to denote the rank-1 linear map $v \mapsto \scalar{\psi}{v}\varphi$ for $v \in \cH$; and $\opnorm{\,}$ to denote the standard operator norm on $\Bop$.

\section{A preparatory theorem}\label{sec:prep_thm}

Our preparatory Theorem~\ref{th:1} concerns the approximation generated by polynomials of $A \in \Bop$ of certain Riesz projections in the operator norm $\opnorm{\,}$. We shall use Theorem~\ref{th:1} in Section~\ref{sec:compact_normal} in order to analyse the structure of the Krylov subspace $\mathcal{K}(A,g)$ itself given some $g \in \cH$. We begin with a simple definition before deriving the main result.

\begin{definition}[\cite{Riesz-Nagy_FA-1955_Eng,Beauzamy-book-1988}]\label{def:admissibledomain}
An admissible domain $\mathcal{U}$ of an operator $A \in \Bop$ is a non-empty bounded open subset of the complex plane $\C$ such that the boundary $\partial \mathcal{U}$ consists of finitely many rectifiable Jordan curves contained in the resolvent $\rho(A)$ of the operator $A$ and oriented in the positive sense.
\end{definition}

\begin{theorem}\label{th:1}
Let $A \in \Bop$ with spectrum $\sigma(A)$. Suppose that $\sigma(A)$ be separated into two parts $\sigma_1$ and $\sigma_2$ such that there are admissible domains $\mathcal{U}_1$ and $\mathcal{U}_2$ containing $\sigma_1$ and $\sigma_2$ respectively; suppose further that $\overline{\mathcal{U}_1}\cap\overline{\mathcal{U}_2} = \emptyset$ and $\C \setminus (\overline{\mathcal{U}_1} \cup \overline{\mathcal{U}_2})$ is connected in $\C$. Then there exist polynomial sequences $(p_n^{(j)})_{n \in \N}$ such that $\opnorm{p_n^{(j)}(A) - P_j} \to 0$ as $n \to \infty$, where
\begin{equation}\label{eq:th_1}
P_j = \frac{1}{2\pi i} \oint_{\partial \mathcal{U}_j} \res{A}{z} \, \ud z\,,
\end{equation}
for $j \in \{1,2\}$.
\end{theorem}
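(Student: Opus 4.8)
The plan is to realise each Riesz projection $P_j$ as the image of a locally constant holomorphic function under the holomorphic functional calculus, and then to approximate that function uniformly by polynomials via Runge's theorem, transferring the approximation to the operator level through the standard contour-integral norm estimate. Concretely, I would set $K := \overline{\mathcal{U}_1} \cup \overline{\mathcal{U}_2}$ and, exploiting the disjointness $\overline{\mathcal{U}_1} \cap \overline{\mathcal{U}_2} = \emptyset$, introduce the function $\chi_j$ equal to $1$ on an open neighbourhood of $\overline{\mathcal{U}_j}$ and $0$ on an open neighbourhood of $\overline{\mathcal{U}_i}$ ($i \neq j$). Since the two closures are disjoint compacta, $\chi_j$ is well defined and holomorphic (indeed locally constant) on an open set containing $K \supseteq \sigma(A)$, and the holomorphic functional calculus, applied by integrating $\chi_j \, \res{A}{z}$ over $\partial \mathcal{U}_1 \cup \partial \mathcal{U}_2$, reproduces exactly \eqref{eq:th_1}; that is, $P_j = \chi_j(A)$.

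Next I would invoke Runge's approximation theorem on the compact set $K$. The hypothesis that $\C \setminus K$ is connected, combined with the boundedness of $K$, guarantees that its complement in the Riemann sphere is connected (the single unbounded component together with the point at infinity), so $K$ has no holes and Runge's theorem furnishes polynomials $(p_n^{(j)})_{n \in \N}$ with $\sup_{z \in K} |p_n^{(j)}(z) - \chi_j(z)| \to 0$ as $n \to \infty$. I would then transfer this to the operator norm: writing $\Gamma := \partial \mathcal{U}_1 \cup \partial \mathcal{U}_2$, which lies in $K \cap \rho(A)$ by the definition of an admissible domain, and using $p_n^{(j)}(A) - P_j = (p_n^{(j)} - \chi_j)(A)$ in its functional-calculus representation, the elementary bound yields
\[
\opnorm{p_n^{(j)}(A) - P_j} \;\leq\; \frac{\mathrm{length}(\Gamma)}{2\pi}\,\Big(\sup_{z \in \Gamma}\opnorm{\res{A}{z}}\Big)\,\sup_{z \in \Gamma}\big|p_n^{(j)}(z) - \chi_j(z)\big|.
\]
Because $\Gamma$ is a compact subset of the resolvent set, the supremum of $\opnorm{\res{A}{z}}$ over $\Gamma$ is finite, while the last factor tends to $0$ by the Runge approximation (uniform convergence on $K \supseteq \Gamma$); hence the left-hand side tends to $0$, which is the claim.

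I expect the main obstacle to lie in justifying that \emph{polynomial}, and not merely rational, approximation is available. This is exactly the role of the connectivity assumption on $\C \setminus (\overline{\mathcal{U}_1} \cup \overline{\mathcal{U}_2})$: without it, Runge's theorem would only produce rational functions with poles located in the bounded components of the complement of $K$, and such functions are not in general polynomials in $A$ (they are resolvents), so the approximating operators would fail to lie in $\mathrm{span}\{A^k\}$. The disjointness condition plays the complementary role of ensuring $\chi_j$ is holomorphic on a neighbourhood of $\sigma(A)$ in the first place. Once these two topological hypotheses are correctly exploited, the remaining steps are the routine norm estimate above and a standard verification that the contour integral indeed computes $\chi_j(A)$.
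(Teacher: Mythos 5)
Your proposal is correct and follows essentially the same route as the paper's own proof: the locally constant function $\chi_j$ (the paper's $f$, equal to $1$ near $\overline{\mathcal{U}_j}$ and $0$ near the other closure) realised via the holomorphic functional calculus as $P_j$, polynomial approximation uniformly on $\overline{\mathcal{U}_1}\cup\overline{\mathcal{U}_2}$ by Runge's theorem (the paper cites \cite[Th.~13.7]{Rudin-realcomplexanalysis} for exactly this), and the standard contour-integral estimate bounding $\opnorm{p_n^{(j)}(A)-P_j}$ by the curve length times $\sup_{z\in\Gamma}\opnorm{\res{A}{z}}$ times the uniform error. Your closing discussion of why connectedness of the complement is what upgrades rational to polynomial approximation matches the role this hypothesis plays in the paper.
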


\begin{proof}
Let $\mathcal{V}_1$ and $\mathcal{V}_2$ be disjoint open sets containing $\overline{\mathcal{U}_1}$ and $\overline{\mathcal{U}_2}$ respectively.
Let $f:\mathcal{V}_1\cup\mathcal{V}_2 \to [0,1]$ be a function such that $f(\mathcal{V}_1) = \{1\}$ and $f(\mathcal{V}_2) = \{0\}$. Clearly $f$ is holomorphic on $\mathcal{V}_1 \cup \mathcal{V}_2$, so using the holomorphic functional calculus (see \cite[Ch.~XI]{Riesz-Nagy_FA-1955_Eng} or \cite[Ch.~II]{Beauzamy-book-1988}) we see that
\[f(A) =  \frac{1}{2\pi i} \oint_{\partial \mathcal{U}_1 \cup \partial \mathcal{U}_2} f(z)\, \res{A}{z} \, \ud z = \frac{1}{2\pi i} \oint_{\partial \mathcal{U}_1} \res{A}{z} \, \ud z = P_1\,.\]
We know from \cite[Th.~13.7]{Rudin-realcomplexanalysis} that there exists a polynomial sequence $(p_n^{(1)})_{n \in \N}$ such that $p_n^{(1)}(z) \xrightarrow{n \to \infty} f(z)$ uniformly on $\overline{\mathcal{U}_1} \cup \overline{\mathcal{U}_2}$. As the Cauchy integral 
\[\frac{1}{2\pi i} \oint_{\partial \mathcal{U}_1 \cup \partial \mathcal{U}_2} \Big( p_n^{(1)}(z) - f(z) \Big) \res{A}{z} \, \ud z\]
is expressible as a limit of Riemann sums, we see that 
\begin{align*}
\tag{*} \opnorm{p_n^{(1)}(A) - f(A)} & \leq \frac{1}{2\pi}\sup_{z \in \overline{\mathcal{U}_1}\cup\overline{\mathcal{U}_2}} \vert p_n^{(1)}(z) - f(z) \vert \oint_{\partial \mathcal{U}_1 \cup \partial \mathcal{U}_2} \opnorm{ \res{A}{z} } \, \ud \vert z \vert \\
 & \leq \frac{l}{2\pi}\sup_{z \in \overline{\mathcal{U}_1}\cup\overline{\mathcal{U}_2}} \vert p_n^{(1)}(z) - f(z) \vert \sup_{z \in \partial\mathcal{U}_1\cup\partial\mathcal{U}_2} \opnorm{ \res{A}{z} }\,,
\end{align*}
where $l < +\infty$ is the length of the curve $\partial\mathcal{U}_1\cup\partial\mathcal{U}_2$.
The right side of (*) vanishes owing to the analyticity of $\res{A}{z}$ for all $z$ in the compact curve $\partial\mathcal{U}_1\cup\partial\mathcal{U}_2$ which guarantees  
\[\sup_{z \in \partial\mathcal{U}_1\cup\partial\mathcal{U}_2} \opnorm{ \res{A}{z} } < +\infty\,,\]
coupled with the uniform vanishing of $\vert p_n^{(1)}(z) - f(z) \vert$ on $\overline{\mathcal{U}_1} \cup \overline{\mathcal{U}_2}$ as $n \to \infty$.

As $f(A) = P_1$, we have our conclusion for $j = 1$. The proof is similar for $j = 2$ by replacing the function $f:\mathcal{V}_1\cup\mathcal{V}_2 \to [0,1]$ with any function $h:\mathcal{V}_1\cup\mathcal{V}_2 \to [0,1]$ such that $h(\mathcal{V}_2) = \{1\}$ and $h(\mathcal{V}_1) = \{0\}$.
\end{proof}

%\begin{remark}\label{rem:Theorem1_beauzamy}
%The convergence of the left-side of (*) in the proof of Theorem~\ref{th:1} is also an implication of the result \cite[Ch.~II, Prop.~2.5]{Beauzamy-book-1988}.
%\end{remark}

\section{Compact normal operators}\label{sec:compact_normal}

In this section we derive fundamental results that describe both the Krylov subspace and prove the Krylov solvability for a compact normal operator. We use this to provide a simple proof of the cyclicity of these operators on separable Hilbert space (Corollary~\ref{cor:normalcompact_cyclic}).

We use the representation given in \cite[Ch.~V]{Kato-perturbation} for a compact normal operator $A$ on Hilbert space $\cH$, namely
\begin{equation}\label{eq:comp_normalop}
A = \sum_{n \in S} \lambda_n P_n\,,
\end{equation}
where $S \subset \N_0$ is an index set with $0 \in S$, and $\lambda_n \in \C\setminus\{0\}$ for all $n \in S\setminus\{0\}$ are the distinct non-zero eigenvalues of $A$, with $\lambda_0 = 0$ not necessarily an eigenvalue of $A$. $(\lambda_n)_{n \in S}$ is a bounded sequence in $\C$ such that $\lambda_n \xrightarrow{n \to \infty} 0$ when $S$ is infinite. The $P_n$'s are mutually orthogonal projections given by 
\begin{equation}\label{eq;normal_projection}
P_n = \frac{1}{2\pi i}\oint_{\partial\mathcal{U}_n} \res{A}{z}\,\ud z\,, \quad n \neq 0\,,
\end{equation}
where $\mathcal{U}_n$ is an admissible domain that contains only the single point $\lambda_n$ from the spectrum $\sigma(A)$. $P_0$ is the orthogonal projection onto $\ker A$ (which is $\0$ when $A$ is injective), and $P_0 P_n = \0$ for all $n \in \N$. Convergence of the sum \eqref{eq:comp_normalop} occurs in the operator norm topology.

We have that the partial sums of $(P_n)_{n \in S}$ form a resolution of the identity,
\begin{equation}\label{eq:res_identity}
\1 = \sum_{n \in S} P_n\,,\text{ with } P_nP_m = \0 \quad m\neq n\,,
\end{equation}
where convergence of the sum is in the strong operator topology, so that
\begin{equation}\label{eq:res_identityP0}
\1 - P_0 = \sum_{n \in S \setminus \{0\}} P_n\,.
\end{equation}
Finally, we recall the important fact that for \emph{any} normal operator we have the relation $\ker A = \ker A^*$, ensuring that $\ran A^\perp = \ker A$.

Our first proposition reveals explicitly the structure of the Krylov subspace in a way that be more easily accessible and more meaningful for the purposes of investigating Krylov solvability and structural properties of the space than the standard definition \eqref{eq:Krylov_subspace}.
\begin{proposition}\label{prop:compactnormalKrylov}
Let $A \in \Bop$ be a compact normal operator and $g \in \cH$. Then
\begin{equation}\label{eq:prop_compactnormalKrylov}
\Kc{A}{g} = \overline{\mathspan{P_n g\,|\, n \in S}}\,.
\end{equation}
\end{proposition}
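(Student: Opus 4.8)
The plan is to prove the two inclusions of \eqref{eq:prop_compactnormalKrylov} separately. The inclusion $\Kc{A}{g} \subseteq \overline{\mathspan{P_n g\,|\, n \in S}}$ is the routine one, while the reverse inclusion carries the real content and will rely on the preparatory Theorem~\ref{th:1}.

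For the inclusion $\subseteq$, I would expand each Krylov vector in the spectral projections. Using \eqref{eq:comp_normalop} together with the orthogonality relations $P_nP_m = \0$ for $m\neq n$ from \eqref{eq:res_identity}, one gets $A^k g = \sum_{n \in S}\lambda_n^k P_n g$ for every $k \in \N$, while $g = \sum_{n\in S}P_n g$ by the resolution of the identity \eqref{eq:res_identity}; all these sums converge in $\Hnorm{\,}$ because the corresponding operator series converge in $\opnorm{\,}$. Since every partial sum is a finite linear combination of the vectors $P_n g$, each $A^k g$ lies in $\overline{\mathspan{P_n g\,|\, n\in S}}$, and hence so does all of $\K{A}{g}$; taking closures gives the inclusion.

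For the reverse inclusion it suffices to show $P_n g \in \Kc{A}{g}$ for every $n \in S$. I would fix first $n \in S\setminus\{0\}$, so that $\lambda_n \neq 0$. Since $A$ is compact, $\lambda_n$ is an isolated point of $\sigma(A)$ (the eigenvalues can accumulate only at $0$), and $\sigma(A)$, being a countable compact set, is totally disconnected and so does not separate the plane. I would therefore set $\sigma_1 = \{\lambda_n\}$ and $\sigma_2 = \sigma(A)\setminus\{\lambda_n\}$ and produce admissible domains $\mathcal{U}_1 \supset \sigma_1$ and $\mathcal{U}_2 \supset \sigma_2$ satisfying the hypotheses of Theorem~\ref{th:1}: a small disc about $\lambda_n$ for $\mathcal{U}_1$, and a finite union of pairwise disjoint small discs for $\mathcal{U}_2$ — one disc containing the cluster of eigenvalues accumulating at $0$ and one disc around each of the finitely many remaining eigenvalues. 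Theorem~\ref{th:1} then furnishes a polynomial sequence with $\opnorm{p_m(A) - P_n} \to 0$, whence $p_m(A)g \to P_n g$ in $\Hnorm{\,}$; as each $p_m(A)g \in \K{A}{g}$, we conclude $P_n g \in \Kc{A}{g}$.

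Finally, for $n = 0$ I would not invoke Theorem~\ref{th:1}, since $0$ need not be isolated in $\sigma(A)$, but instead argue by subtraction. By \eqref{eq:res_identityP0} the partial sums of $\sum_{n \in S\setminus\{0\}}P_n g$ converge in norm to $(\1 - P_0)g$, and each such partial sum already lies in the closed subspace $\Kc{A}{g}$ by the previous step, so $(\1-P_0)g \in \Kc{A}{g}$; since also $g \in \K{A}{g}$, we obtain $P_0 g = g - (\1 - P_0)g \in \Kc{A}{g}$. The main obstacle I anticipate is precisely the verification that the separating admissible domains of Theorem~\ref{th:1} exist for a single isolated eigenvalue: one must ensure simultaneously that $\overline{\mathcal{U}_1}\cap\overline{\mathcal{U}_2} = \emptyset$, that $\partial\mathcal{U}_2$ stays in $\rho(A)$ while enclosing the infinite set $\sigma_2$, and that $\C\setminus(\overline{\mathcal{U}_1}\cup\overline{\mathcal{U}_2})$ remains connected — this last requirement being what forces the use of pairwise disjoint discs rather than, say, an annular region surrounding $\lambda_n$, which would trap a bounded hole and destroy the connectedness needed for polynomial (as opposed to merely rational) approximation.
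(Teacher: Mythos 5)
Your proposal is correct and follows essentially the same route as the paper's own proof: both directions are argued identically, including the use of Theorem~\ref{th:1} with exactly the paper's choice of admissible domains (a small disc about the isolated eigenvalue $\lambda_n$, a disc about $0$ capturing the accumulating tail of the spectrum, and finitely many disjoint discs around the remaining eigenvalues, so that the complement of the union stays connected), and the same subtraction argument via \eqref{eq:res_identityP0} to recover $P_0 g$. The only cosmetic difference is that the paper isolates the trivial case $A = \0$ separately, whereas your argument absorbs it automatically since $S\setminus\{0\}$ is then empty.
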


\begin{proof}
When $A = \0$, the result is trivially true as $P_0 = \1$ and $S = \{0\}$. 

Assume $A \neq \0$ so that $S \supsetneq \{0\}$, and take $n_0 \in S\setminus\{0\}$. Let $B$ be a bounded open ball about $0$ such that $\lambda_{n_0} \notin \overline{B}$ and $\partial{B} \subset \rho(A)$ (this is always possible as $\sigma(A) = \{\lambda_n\}_{n \in S}$ is discrete). There are at most finitely many points of $\sigma(A)$ outside $\overline{B}$ as $0$ is the only point of accumulation possible in the spectrum.  For each remaining point $\lambda_n \in \sigma(A) \setminus \overline{B}$ we construct bounded open balls $B_n$, containing only the respective point $\lambda_n$, with disjoint closures and also disjoint closure from $\overline{B}$ (clearly $\partial B_n \subset \rho(A)$). Let $\mathcal{U}$ be
\[\mathcal{U} = B \cup \left( \bigcup_{n \in S', n \neq n_0} B_n \right)\,,\]
where $S' \subset S$ is the finite set containing the indices of all the spectral points $\lambda_n$ not in $\overline{B}$. $\mathcal{U}$ and $B_{n_0}$ are admissible domains with mutually disjoint closures, and moreover $\C \setminus (\overline{\mathcal{U}} \cup \overline{B_{n_0}})$ is connected (indeed, $\overline{\mathcal{U}} \cup \overline{B_{n_0}}$ is the union of finitely many bounded, disjoint, closed balls).

Applying Theorem~\ref{th:1} taking $\mathcal{U}_1 = B_{n_0}$ and $\mathcal{U}_2 = \mathcal{U}$, there exists some polynomial sequence $(p^{(n_0)}_j)_{j \in \N}$ such that $\opnorm{p_j^{(n_0)}(A) - P_{n_0}} \xrightarrow{j\to\infty} 0$, and in particular $\Hnorm{p_j^{(n_0)}(A)g - P_{n_0} g} \xrightarrow{j\to\infty} 0$. Therefore $P_{n_0} g \in \Kc{A}{g}$. As $n_0 \in S \setminus \{0\}$ was arbitrary, we have following inclusion
\begin{equation}\label{eq:prop_compactnormalKrylov_1}
\overline{\mathspan{P_n g \,|\, n \in S \setminus \{0\}}} \subset \Kc{A}{g}\,.
\end{equation}
As $\sum_{n \in S \setminus \{0\}} P_n g \in \Kc{A}{g}$ from \eqref{eq:prop_compactnormalKrylov_1}, combining this with \eqref{eq:res_identityP0} implies that $(\1 - P_0)g \in \Kc{A}{g}$. The linearity of $\Kc{A}{g}$ and the fact that $g \in \Kc{A}{g}$ imply $P_0 g \in \Kc{A}{g}$. We therefore have the inclusion
\[\overline{\mathspan{P_n g \,|\, n \in S }} \subset \Kc{A}{g}\,.\]

The reverse inclusion is obtained from the fact that for any $k \in \N$,
\[A^k g = \sum_{n \in S\setminus\{0\}} \lambda_n^k P_n g\]
which implies that
\[A^k g \in \overline{\mathspan{P_n g \,|\, n \in S }}\,, \quad \forall\, k \in \N\,.\]
From equation \eqref{eq:res_identity}, $g = \sum_{n \in S} P_n g$ so that $g \in \overline{\mathspan{P_n g \,|\, n \in S }}$, and so
\[\K{A}{g} \subset \overline{\mathspan{P_n g \,|\, n \in S }}\,.\]
We conclude by taking the closure.
\end{proof}

Here we present the following corollary of Proposition~\ref{prop:compactnormalKrylov} that provides a simple exposition of the cyclicity of compact normal operators with simple eigenvalues. We recall that an operator $A \in \Bop$ is called \emph{cyclic} if there exists some $g \in \cH$ such that $\K{A}{g}$ is dense in $\cH$. Though the conclusion of Corollary~\ref{cor:normalcompact_cyclic} is already known (see, for example, \cite[Cor.~30.15]{Conway-book-2000_optheory} or \cite[Th.~1.1]{RossWogen-2004_cyclicnormal}), we choose to present it here through the lens of our explicit knowledge of the Krylov subspace provided by Proposition~\ref{prop:compactnormalKrylov}.
\begin{corollary}\label{cor:normalcompact_cyclic}
Let $A \in \Bop$ be a compact normal operator on a separable Hilbert space $\cH$ with $\dim (\ker A) \leq 1$, and $\dim (\ran P_n) = 1$ for all $n \in S \setminus \{0\}$. Then $A$ is cyclic.
\end{corollary}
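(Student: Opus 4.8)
The plan is to use Proposition~\ref{prop:compactnormalKrylov} to convert the cyclicity question into a purely geometric one and then to exhibit an explicit cyclic vector. Since $A$ is cyclic precisely when there is some $g \in \cH$ with $\Kc{A}{g} = \cH$, and since Proposition~\ref{prop:compactnormalKrylov} identifies $\Kc{A}{g} = \overline{\mathspan{P_n g \mid n \in S}}$, it suffices to produce a single $g$ for which $\overline{\mathspan{P_n g \mid n \in S}} = \cH$. The hypotheses are tailored to make this tractable: each $\ran P_n$ with $n \in S \setminus \{0\}$ is one-dimensional, and $\ran P_0 = \ker A$ has dimension at most one, so that the entire subspace $\ran P_n$ is recovered as soon as $P_n g \neq \0$.

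First I would fix, for every $n \in S$ with $\ran P_n \neq \{0\}$, a unit vector $e_n$ spanning $\ran P_n$. The resolution of the identity \eqref{eq:res_identity} gives $v = \sum_{n \in S} P_n v = \sum_n \scalar{e_n}{v} e_n$ for every $v \in \cH$, so the family $(e_n)$ is an orthonormal basis of $\cH$; separability guarantees that $S$ is countable, which is exactly what makes the next step possible.

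Next I would build the candidate cyclic vector as a series that meets every spectral subspace simultaneously. Enumerating the relevant indices and choosing scalars $c_n \neq 0$ with $\sum_n |c_n|^2 < \infty$ (for instance a geometrically decaying sequence), I set $g := \sum_n c_n e_n$. Square-summability ensures $g \in \cH$, and the relations $P_n e_m = \delta_{nm} e_m$ give $P_n g = c_n e_n \neq \0$ for every $n$ with $\ran P_n \neq \{0\}$. Hence $\mathspan{P_n g \mid n \in S} = \mathspan{e_n \mid \ran P_n \neq \{0\}}$, whose closure is all of $\cH$ because $(e_n)$ is an orthonormal basis. By Proposition~\ref{prop:compactnormalKrylov} this forces $\Kc{A}{g} = \cH$, so $g$ is a cyclic vector and $A$ is cyclic.

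The argument has no serious obstacle once Proposition~\ref{prop:compactnormalKrylov} is available; the only point requiring care is that the cyclic vector must have a nonzero component in every eigenspace while still lying in $\cH$. This is precisely where separability (hence countability of $S$) and the one-dimensionality of the eigenspaces enter: they permit $g$ to be assembled as a convergent square-summable series of basis vectors with all coefficients nonzero, and they ensure that the single nonzero component $P_n g$ already exhausts $\ran P_n$. The one piece of bookkeeping is the status of the index $0$ — whether $\ker A$ is trivial or one-dimensional — but in both cases the nonzero $e_n$ constitute an orthonormal basis of $\cH$, so the construction goes through unchanged.
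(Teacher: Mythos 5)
Your proof is correct and follows essentially the same route as the paper: both invoke Proposition~\ref{prop:compactnormalKrylov} to reduce cyclicity to spanning the ranges of the $P_n$'s, pick unit vectors spanning each (at most) one-dimensional range, and assemble a cyclic vector as a square-summable series with all coefficients nonzero (the paper uses $1/n$ where you allow any nonzero $\ell^2$ sequence). The only cosmetic difference is your explicit bookkeeping of indices with $\ran P_n \neq \{0\}$ versus the paper's convention that $\varphi_0$ may be the zero vector.
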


\begin{proof}
We know that the $P_n$'s form a resolution of the identity and are mutually orthogonal, so there exists an orthogonal basis $\{\varphi_n\}_{n \in S}$ of $\cH$ such that $P_n = \brahket{\varphi_n}{\varphi_n}$, where $\Hnorm{\varphi_n} = 1$ for all $n \in S\setminus \{0\}$, and
\[
\Hnorm{\varphi_0} = \begin{cases}
	0\, & \text{if } \ker A = \{0\}\,,\\
	1\, & \text{if } \dim (\ker A)= 1\,.
\end{cases}
\]
Let $g = \varphi_0 + \sum_{n \in S\setminus\{0\}} \frac{1}{n} \varphi_n$ so that $g \in \cH$. By Proposition~\ref{prop:compactnormalKrylov}
\[\Kc{A}{g} = \overline{\mathspan{P_n g \,|\, n \in S}} = \overline{\mathspan{\varphi_n \,|\, n \in S}} = \cH\,.\]
\end{proof}

\begin{remark}\label{rem:conwaywogan_cyclic}
We elaborate on the comparison between the cyclicity result Corollary~\ref{cor:normalcompact_cyclic} and the more general cyclicity condition for normal operators presented in \cite[Th.~1.1]{RossWogen-2004_cyclicnormal}. Theorem~1.1 of \cite{RossWogen-2004_cyclicnormal} states that a normal operator $A \in \Bop$ is cyclic if and only if there exists a positive, finite, Borel measure $\mu$ on $\sigma(A)$ such that $A$ is unitarily equivalent to the multiplication operator $M_z :L^2(\mu) \to L^2(\mu)$ with action $f(z) \mapsto zf(z)$.

Indeed, it is enough to consider $A$ and $g \in \cH$ as given in the statement and proof respectively of Corollary~\ref{cor:normalcompact_cyclic}, and the finite positive Borel measure $\mu(\Omega) := \langle E^{(A)}(\Omega) g, g \rangle$, where $E^{(A)}$ is the unique projection valued spectral measure for $A$. $\mu$ has support exactly on $\sigma(A)$, and there exists the unitary operator $T : L^2(\mu) \to \cH$ with action $f(z) \mapsto f(A)g$. $f(A)$ is understood in terms of the spectral functional calculus
\[f(A) := \int_{\sigma(A)} f(z) \,\ud E^{(A)}(z)\,.\]
We see that $T ((M_z f)(z)) = A T f$ for all $f \in L^2(\mu)$, i.e., $M_z = T^*AT$, thus the statement of Corollary~\ref{cor:normalcompact_cyclic} satisfies the conditions of \cite[Th.~1.1]{RossWogen-2004_cyclicnormal} and therefore $A$ is cyclic.
\end{remark}

The following proposition reveals our main result, namely that compact normal operators give rise to Krylov solvable inverse linear problems, and therefore belong to a larger class of operators that \emph{always} exhibit Krylov solvability (this class contains, for example, the bounded self-adjoint and $\mathscr{K}$-class operators). The proof of this proposition is a result of the explicit knowledge of the Krylov subspace as revealed in Proposition~\ref{prop:compactnormalKrylov}.

\begin{proposition}\label{prop:compactnormal_Ksolvable}
Let $A \in \Bop$ be a compact normal operator. If $g \in \ran A$, then $Af = g$ has a unique and minimal norm Krylov solution. If in addition $A \neq \0$, then the Krylov solution is
\begin{equation}\label{eq:prop_compactnormal_Ksolution}
f_\circ = \sum_{n \in S\setminus\{0\}} \frac{1}{\lambda_n} P_n g\,.
\end{equation}
\end{proposition}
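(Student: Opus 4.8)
The plan is to use the explicit description of the closed Krylov subspace from Proposition~\ref{prop:compactnormalKrylov}, namely $\Kc{A}{g} = \overline{\mathspan{P_n g \mid n \in S}}$, together with the spectral decomposition $A = \sum_{n \in S} \lambda_n P_n$. First I would address existence by directly verifying that the candidate vector $f_\circ = \sum_{n \in S \setminus \{0\}} \frac{1}{\lambda_n} P_n g$ is well-defined in $\cH$, solves the problem, and lies in the Krylov subspace. Since $g \in \ran A$ and $\ran A^\perp = \ker A = \ran P_0$, we have $P_0 g = 0$, so $g = \sum_{n \in S \setminus \{0\}} P_n g$. To check $f_\circ \in \cH$, I would estimate $\Hnorm{f_\circ}^2 = \sum_{n \in S \setminus \{0\}} \frac{1}{|\lambda_n|^2} \Hnorm{P_n g}^2$ using orthogonality of the $P_n$; convergence here is exactly what requires $g \in \ran A$ rather than merely $g \in \cH$, since writing $g = Ah$ for some $h \in \cH$ gives $P_n g = \lambda_n P_n h$, whence $\frac{1}{\lambda_n} P_n g = P_n h$ and $\sum_n \Hnorm{P_n h}^2 \leq \Hnorm{h}^2 < \infty$ by \eqref{eq:res_identity}. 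This simultaneously shows $f_\circ$ is well-defined and that $f_\circ = (\1 - P_0) h$.

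Next I would verify that $f_\circ$ solves $Af = g$: applying $A$ term by term (justified by operator-norm convergence of the spectral sum and orthogonality) gives $Af_\circ = \sum_{n \in S \setminus \{0\}} \lambda_n \cdot \frac{1}{\lambda_n} P_n g = \sum_{n \in S \setminus \{0\}} P_n g = g$. Membership in the Krylov subspace is then immediate from Proposition~\ref{prop:compactnormalKrylov}: each partial sum $\sum_{n} \frac{1}{\lambda_n} P_n g$ is a finite linear combination of vectors $P_n g$, so $f_\circ \in \overline{\mathspan{P_n g \mid n \in S}} = \Kc{A}{g}$.

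For uniqueness and minimality I would exploit the orthogonal structure. Any Krylov solution $f$ lies in $\Kc{A}{g} = \overline{\mathspan{P_n g \mid n \in S}}$, and since $P_0 g = 0$ this closed subspace is contained in $\overline{\ran(\1 - P_0)} = \ker A^\perp$; thus every Krylov solution is orthogonal to $\ker A$. If $f_1, f_2$ are two Krylov solutions then $f_1 - f_2 \in \ker A \cap \ker A^\perp = \{0\}$, giving uniqueness. For minimality, an arbitrary solution $f$ of $Af = g$ decomposes as $f = (\1 - P_0)f + P_0 f$ with the two summands orthogonal; since $(\1 - P_0) f$ is itself the unique Krylov solution $f_\circ$ (both solve the problem and lie in $\ker A^\perp$), we get $\Hnorm{f}^2 = \Hnorm{f_\circ}^2 + \Hnorm{P_0 f}^2 \geq \Hnorm{f_\circ}^2$, so $f_\circ$ has minimal norm among all solutions.

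The main obstacle is the justification of convergence and of the term-by-term application of $A$ in the infinite-dimensional case when $S$ is infinite: one must be careful that the series defining $f_\circ$ converges in $\cH$ (which hinges precisely on $g \in \ran A$, as the factors $\frac{1}{\lambda_n}$ blow up as $\lambda_n \to 0$) and that $A \sum_n = \sum_n A$ is legitimate. I expect both to follow cleanly from the orthogonality relations \eqref{eq:res_identity} and the identity $P_n g = \lambda_n P_n h$, but this is the step that genuinely distinguishes the hypothesis $g \in \ran A$ from the weaker $g \in \overline{\ran A}$, and it deserves the most care.
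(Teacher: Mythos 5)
Your proof is correct and follows essentially the same route as the paper: the same candidate $f_\circ$, the same well-definedness argument via $g = Ah$ and $P_n g = \lambda_n P_n h$ (so $\frac{1}{\lambda_n}P_n g = P_n h$), the same term-by-term verification that $Af_\circ = g$, and the same appeal to Proposition~\ref{prop:compactnormalKrylov} for membership of $f_\circ$ in $\Kc{A}{g}$. The only notable difference is that where the paper invokes an external result (Prop.~3.9 of \cite{CMN-2018_Krylov-solvability-bdd}) for uniqueness of the Krylov solution, you derive it directly from the inclusion $\Kc{A}{g} \subset \ker A^\perp$ together with $\ker A \cap \ker A^\perp = \{0\}$ --- a self-contained variant that is equally valid here.
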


\begin{proof}
When $A = \0$ the conclusion is obvious: $g \in \ran A$ implies that $g = 0$ and therefore $\Kc{A}{g} = \{0\}$ with a solution $f_\circ = 0 \in \Kc{A}{g}$ to the inverse linear problem. Therefore we consider the case $A \neq \0$.

First we show that the vector $f_\circ$ in \eqref{eq:prop_compactnormal_Ksolution} is in $\cH$. Indeed, as $g \in \ran A$ there exists some $f \in \cH$ such that $Af = g$, and therefore
\[g = \sum_{n \in S} \lambda_n P_n f = \sum_{n \in S \setminus \{0\}} \lambda_n P_n f\,.\]
Given any $n \in S\setminus\{0\}$ owing to the mutual orthogonality of the projections $(P_n)_{n \in S}$, we get $P_n g = \lambda_n P_n f$. Therefore $P_n f = \frac{1}{\lambda_n} P_n g$ for all $n \in S\setminus \{0\}$ and
\[\Hnorm{\sum_{n \in S \setminus \{0\}} \frac{1}{\lambda_n} P_n g} = \Hnorm{\sum_{n \in S \setminus \{0\}} P_n f} = \Hnorm{(\1 - P_0)f} < + \infty\,,\]
where equation \eqref{eq:res_identityP0} is used in the last equality, so that indeed $f_\circ \in \cH$.

Next we show by direct substitution that $f_\circ$ is a solution to $Af = g$. Indeed,
\[\begin{split}
Af_\circ & = A \sum_{n \in S\setminus\{0\}} \frac{1}{\lambda_n} P_n g \\
 & = \sum_{m \in S} \lambda_m P_m \sum_{n \in S\setminus\{0\}} \frac{1}{\lambda_n} P_n g \\
  & = \sum_{m \in S\setminus\{0\}} \sum_{n \in S\setminus\{0\}} \lambda_m P_m  \frac{1}{\lambda_n} P_n g \\
  & = \sum_{n \in S\setminus\{0\}} P_n g\,.
\end{split}\]
As $g \in \ran A \perp \ker A$ this implies $P_0 g = 0$ meaning that $g = \sum_{n \in S\setminus\{0\}} P_n g$, and indeed $f_\circ$ is a solution to the inverse linear problem as claimed.

As $f_\circ \perp \ker A$, any solution $f$ to $Af = g$ must be of the form $f = f_\circ + \psi$ where $\psi \in \ker A$. Therefore, $\Hnorm{f}^2 = \Hnorm{f_\circ}^2 + \Hnorm{\psi}^2$ from which $f_\circ$ is a minimal norm solution.

From Proposition~\ref{prop:compactnormalKrylov}
\[\Kc{A}{g} = \overline{\mathspan{P_n g\,|\, n \in S}}\,,\]
so it follows $f_\circ \in \Kc{A}{g}$. This solution is unique in $\Kc{A}{g}$ owing to \cite[Prop. 3.9]{CMN-2018_Krylov-solvability-bdd}.
\end{proof}

\begin{remark}\label{rem:Ksolution_uniquenessnormal}
We recall that for \emph{any} $A \in \Bop$ normal with Krylov solvable inverse linear problem $Af = g$, $g \in \ran A$, \cite[Prop. 3.9]{CMN-2018_Krylov-solvability-bdd} states that there exists \emph{exactly one} Krylov solution. 
\end{remark}

\begin{remark}\label{rem:Ksolution_normminnormal}
The argument of the norm minimality of the Krylov solution (if it exists) from the proof of Proposition~\ref{prop:compactnormal_Ksolvable} can be extended beyond the class of compact normal operators to the whole class of bounded normal operators. Indeed, let $f_\circ \in \Kc{A}{g}$ be the Krylov solution (when it exists) to the inverse linear problem. As $\Kc{A}{g} \subset \overline{\ran A} \perp \ker A$ it follows that $f_\circ \perp \ker A$, and any solution $f$ to $Af = g$ has the form $f = f_\circ + \psi$, where $\psi \in \ker A$. Therefore, as $f_\circ \perp \psi$ we have $\Hnorm{f}^2 = \Hnorm{f_\circ}^2 + \Hnorm{\psi}^2$ from which $f_\circ$ is a minimal norm solution.
\end{remark}

\begin{remark}\label{rem:Ksolution_generalops}
The above considerations in Remarks~\ref{rem:Ksolution_uniquenessnormal} and \ref{rem:Ksolution_normminnormal} also hold for Krylov solutions (if they exist) to the inverse linear problem arising from \emph{any} operator $A \in \Bop$, provided that $\ker A \subset \ker A^*$.
\end{remark}

Our next proposition analyses an important structural property of the Krylov subspace informally known as \emph{Krylov reducibility}, that is intimately linked to the Krylov solvability properties of inverse linear problems (see \cite[Prop.~3.3]{CMN-2018_Krylov-solvability-bdd}). First we recall the appropriate definition for \emph{bounded} operators. (For the unbounded setting one may refer to \cite{CM-2019_ubddKrylov}.)

\begin{definition}[\cite{CMN-2018_Krylov-solvability-bdd}]\label{def:K-reducibility}
Let $A \in \Bop$ and $g \in \cH$. If both $\Kc{A}{g}$ and $\K{A}{g}^\perp$ are invariant under $A$, i.e.,
\begin{equation}\label{eq:def-Kreduced}
A\, \Kc{A}{g} \subset \Kc{A}{g}\,, \quad A\, \K{A}{g}^\perp \subset \K{A}{g}^\perp\,,
\end{equation}
then we say that $A$ is $\K{A}{g}$-reduced. If $A$ is $\K{A}{g}$-reduced then we also have $A^* \Kc{A}{g} \subset \Kc{A}{g}$ \cite[Lem.~2.2]{CMN-2018_Krylov-solvability-bdd}.
\end{definition}

\begin{proposition}\label{prop:compactnormal_Kreduced}
Let $A \in \Bop$ be a compact normal operator, and let $g \in \cH$. Then $A$ is $\K{A}{g}$-reduced.
\end{proposition}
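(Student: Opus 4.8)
The plan is to read off the result from the explicit description of the closed Krylov subspace furnished by Proposition~\ref{prop:compactnormalKrylov}, namely $\Kc{A}{g} = \overline{\mathspan{P_n g \,|\, n \in S}}$, together with the observation that each spanning vector $P_m g$ is a joint eigenvector of both $A$ and $A^*$. Since $A$ is compact and normal, its spectral decomposition $A = \sum_{n \in S} \lambda_n P_n$ converges in operator norm and the projections satisfy $P_n P_m = \0$ for $n \neq m$; consequently the adjoint admits the decomposition $A^* = \sum_{n \in S} \overline{\lambda_n} P_n$ over the very same projections. The whole argument will hinge on the diagonal action of $A$ and $A^*$ on this explicit spanning set.

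First I would establish the invariance $A\,\Kc{A}{g} \subset \Kc{A}{g}$. Evaluating $A$ term by term on $P_m g$ and using the orthogonality of the projections gives $A P_m g = \sum_{n \in S} \lambda_n P_n P_m g = \lambda_m P_m g$, so that each spanning vector is sent to a scalar multiple of itself and therefore remains in $\mathspan{P_n g \,|\, n \in S}$. By linearity $A$ maps the span into itself, and by boundedness (hence continuity) it maps the closure $\Kc{A}{g}$ into itself. The identical computation for $A^*$ yields $A^* P_m g = \overline{\lambda_m} P_m g$, so that $A^*$ likewise leaves $\Kc{A}{g}$ invariant; that is, $\Kc{A}{g}$ is a \emph{reducing} subspace for $A$.

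It then remains only to deduce the second inclusion $A\,\K{A}{g}^\perp \subset \K{A}{g}^\perp$ from this two-sided invariance. Indeed, for any $v \in \K{A}{g}^\perp = \Kc{A}{g}^\perp$ and any $w \in \Kc{A}{g}$ one has $\scalar{Av}{w} = \scalar{v}{A^* w} = 0$, because $A^* w \in \Kc{A}{g}$ while $v \perp \Kc{A}{g}$; hence $Av \perp \Kc{A}{g}$, i.e.\ $Av \in \K{A}{g}^\perp$. Together with the first inclusion this verifies both conditions of Definition~\ref{def:K-reducibility}. I do not anticipate a genuine obstacle: the argument reduces entirely to the diagonal action of $A$ and $A^*$ on the explicit spanning set, and the only point meriting care is the legitimacy of the term-by-term evaluation $A P_m g = \lambda_m P_m g$, which is guaranteed by the operator-norm convergence of the series for $A$ and the analogous series for $A^*$. (The degenerate case $A = \0$, where $S = \{0\}$ and $\lambda_0 = 0$, is subsumed by the same computation.)
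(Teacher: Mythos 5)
Your proof is correct, but it follows a genuinely different route from the paper's. The paper's proof is a two-line reduction: it writes $A^*g = \sum_{n \in S} \overline{\lambda_n} P_n g$, observes via Proposition~\ref{prop:compactnormalKrylov} that this vector lies in $\Kc{A}{g}$, and then invokes the known equivalence \cite[Prop.~2.4]{CMN-2018_Krylov-solvability-bdd} --- for bounded \emph{normal} operators, $A$ is $\K{A}{g}$-reduced if and only if $A^*g \in \Kc{A}{g}$ --- to conclude. You instead verify both inclusions of Definition~\ref{def:K-reducibility} directly: the diagonal action $A P_m g = \lambda_m P_m g$ and $A^* P_m g = \overline{\lambda_m} P_m g$ on the explicit spanning set of Proposition~\ref{prop:compactnormalKrylov} shows that $A$ and $A^*$ both leave $\Kc{A}{g}$ invariant, and the adjoint identity $\scalar{Av}{w} = \scalar{v}{A^*w}$ then transfers the $A^*$-invariance of $\Kc{A}{g}$ to the $A$-invariance of $\K{A}{g}^\perp$. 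What your approach buys is self-containedness: you never need the external characterisation of Krylov reducibility for normal operators, only the definition, and your argument makes transparent that $\Kc{A}{g}$ is a genuinely reducing subspace. The trade-offs are length and a mild redundancy: the first inclusion $A\,\Kc{A}{g} \subset \Kc{A}{g}$ holds for \emph{any} bounded operator and any $g$ by continuity alone, with no need for the spectral decomposition, so the real content in both proofs is the same single fact, namely that $\Kc{A}{g}$ is invariant under $A^*$ --- which the paper encodes in the one vector $A^*g$ and delegates to the cited lemma, and which you establish on the whole subspace by hand.
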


\begin{proof}
The action of $A^*$ is given by
\[A^* = \sum_{n \in S} \overline{\lambda_n} P_n\,.\]
Therefore
\[A^* g =  \sum_{n \in S} \overline{\lambda_n} P_n g\,, \]
where by Proposition~\ref{prop:compactnormalKrylov} $A^*g \in \Kc{A}{g}$. We know that for bounded normal operators, $A$ is $\K{A}{g}$-reduced if and only if $A^* g \in \Kc{A}{g}$ \cite[Prop. 2.4]{CMN-2018_Krylov-solvability-bdd}. This completes the proof.
\end{proof}

\section{An isomorphism of Krylov subspaces generated by bounded normal operators}\label{sec:normal_bdd}
In this section we let $\specmeas{A}$ be the unique projection valued spectral measure for a normal operator $A \in \Bop$ (see \cite[Ch.~5]{schmu_unbdd_sa}), with associated scalar measure $\mug{A}{g}(\Omega) = \scalar{\specmeas{A}(\Omega) g}{g}$ for $g \in \cH$ and $\Omega \subset \C$ a Borel set. It is known that $\specmeas{A}$ has support only over $\sigma(A)$, and that $\mug{A}{g}$ is a positive, regular Borel measure \cite{schmu_unbdd_sa}.
\begin{theorem}\label{th:normalreduced_isomorphic}
Let $A\in\Bop$ be a normal operator and $g \in \cH$. If $A$ is $\K{A}{g}$-reduced, then there exists the isomorphism 
\begin{equation}\label{eq:th_isomorphic}
\begin{split}
L^2(\sigma(A), \mug{A}{g}) & \xrightarrow{\cong} \Kc{A}{g}\,,\\
f & \longmapsto f(A)g\,,
\end{split}
\end{equation}
where $f(A)$ is understood in terms of the spectral functional calculus
\begin{equation}\label{eq:specfunc_calc}
f(A) := \int_{\sigma(A)} f(z) \, \ud \specmeas{A}(z)\,.
\end{equation}
\end{theorem}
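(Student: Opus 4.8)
The plan is to construct the map $\Phi: f \mapsto f(A)g$ explicitly and verify it is a well-defined linear isometry with dense range onto $\Kc{A}{g}$, hence extends to an isometric isomorphism. First I would record the fundamental isometry relation coming from the spectral theorem: for any $f \in L^2(\sigma(A), \mug{A}{g})$,
\begin{equation*}
\Hnorm{f(A)g}^2 = \scalar{f(A)g}{f(A)g} = \int_{\sigma(A)} |f(z)|^2 \, \ud\scalar{\specmeas{A}(z)g}{g} = \int_{\sigma(A)} |f(z)|^2 \, \ud\mug{A}{g}(z) = \norm{f}{L^2(\mug{A}{g})}^2,
\end{equation*}
where I use that $f(A)^* f(A) = (\bar f f)(A) = |f|^2(A)$ by the multiplicativity of the functional calculus and that $\scalar{|f|^2(A)g}{g} = \int |f|^2 \,\ud\mug{A}{g}$. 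This single computation simultaneously shows $\Phi$ is well-defined (independent of representatives, since $f = 0$ in $L^2(\mug{A}{g})$ forces $\Hnorm{f(A)g} = 0$) and that it is a linear isometry, hence injective.

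Next I would pin down the range. Taking $f(z) = z^k$ for $k \in \N_0$ gives $\Phi(z^k) = A^k g$, so the range contains every generator of $\K{A}{g}$, and by linearity $\K{A}{g} \subset \ran\Phi$; since $\Phi$ is an isometry its range is closed, giving $\Kc{A}{g} \subset \overline{\ran\Phi} = \ran\Phi$. The reverse inclusion — that $\ran\Phi \subset \Kc{A}{g}$, i.e.\ that $f(A)g \in \Kc{A}{g}$ for \emph{every} $f \in L^2(\mug{A}{g})$, not just polynomials — is where the hypothesis that $A$ is $\K{A}{g}$-reduced enters and is the main obstacle. The point is that the polynomials $z^k$ are dense in $L^2(\mug{A}{g})$ \emph{only} when $A$ (restricted appropriately) is cyclic; in general they need not be, and without the Krylov-reducedness hypothesis $f(A)g$ could have a component orthogonal to $\Kc{A}{g}$.

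To overcome this I would exploit Definition~\ref{def:K-reducibility}: when $A$ is $\K{A}{g}$-reduced, both $\Kc{A}{g}$ and its orthogonal complement are invariant under $A$ and under $A^*$. This lets me decompose $\cH = \Kc{A}{g} \oplus \K{A}{g}^\perp$ into reducing subspaces for $A$, so $A$ restricts to a normal operator $A_0$ on $\Kc{A}{g}$ whose spectral measure is the restriction of $\specmeas{A}$. The key consequence is that $g$ is a \emph{cyclic} vector for $A_0$ on $\Kc{A}{g}$: the closure of $\mathrm{span}\{A_0^k g\}$ is exactly $\Kc{A}{g}$ by construction. Cyclicity then yields, via the standard spectral-theorem argument, that the polynomials are dense in $L^2(\sigma(A_0), \mug{A_0}{g}) = L^2(\sigma(A),\mug{A}{g})$, and consequently $f(A)g = f(A_0)g \in \Kc{A}{g}$ for all $f$. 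I would finish by noting that $\Phi$ is a surjective linear isometry and therefore the desired isometric isomorphism, remarking that the functional calculus is computed against the scalar measure $\mug{A}{g}$ so that $L^2$-equivalence classes map faithfully.
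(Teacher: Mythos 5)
Your proposal is correct, but it reaches the key inclusion $\ran\Phi \subset \Kc{A}{g}$ by a genuinely different route than the paper. The paper never restricts $A$ to the Krylov subspace: it uses $\K{A}{g}$-reducedness only through the elementary algebraic observation that $(A^*)^k A^n g \in \Kc{A}{g}$, hence $q(A,A^*)g \in \Kc{A}{g}$ for every polynomial $q$ in $z$ and $\bar z$; it then invokes Stone--Weierstrass together with the density of $C(\sigma(A))$ in $L^2(\sigma(A),\mug{A}{g})$ to approximate an arbitrary $f \in L^2$ by such polynomials, and the functional-calculus isometry closes the argument. Surjectivity is obtained by the same isometry applied to Cauchy sequences of $z$-polynomials, which is your ``closed range of an isometry'' step written out by hand. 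You instead pass to the reducing decomposition $\cH = \Kc{A}{g} \oplus \K{A}{g}^\perp$, restrict $A$ to a normal operator $A_0$ on $\Kc{A}{g}$ for which $g$ is cyclic (hence trivially $*$-cyclic), and invoke the standard spectral representation theorem for $*$-cyclic normal operators. This is legitimate and arguably more illuminating --- it makes transparent that reducedness is exactly what turns $\Kc{A}{g}$ into a reducing subspace to which the classical representation theorem applies, and it delivers the stronger statement that \eqref{eq:th_isomorphic} is an isometric (not merely linear) isomorphism. The trade-off is that your route outsources real content to two standard-but-nontrivial facts that you assert rather than prove: that the orthogonal projection onto a reducing subspace of a normal operator commutes with the entire Borel functional calculus (a Fuglede-type/von Neumann algebra fact, needed both for $\specmeas{A}(\Omega)\,\Kc{A}{g} \subset \Kc{A}{g}$ and for the identity $f(A)g = f(A_0)g$, including for unbounded $f \in L^2$), and the $*$-cyclic representation theorem itself, whose proof contains precisely the Stone--Weierstrass argument the paper carries out directly. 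If you spell out those two ingredients, or cite them precisely (e.g.\ to \cite{schmu_unbdd_sa} or \cite{Conway-book-2000_optheory}), your argument is a complete and self-contained alternative proof.
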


\begin{proof}
Let $g \in \cH$ be such that $A$ is $\K{A}{g}$-reduced. Then 
\[(A^*)^k \Kc{A}{g} \subset \Kc{A}{g}\]
for any $k \in \N_0$. This clearly implies $(A^*)^k A^n g \in \Kc{A}{g}$ for all $k \in \N_0$ and for all $n \in \N_0$. Let $q(A,A^*)$ be a polynomial in $A$ and $A^*$. As $A$ and $A^*$ commute it follows $q(A,A^*)g \in \Kc{A}{g}$.

By the Stone-Weierstrass theorem the algebra of complex coefficient polynomials in both $z$ and $\bar{z}$ on the compact set $\sigma(A)$ is dense in the space of continuous functions on $\sigma(A)$ equipped with supremum norm. As $\mug{A}{g}$ is a regular Borel measure on the compact set $\sigma(A)$, by standard approximation theorems (e.g., see \cite[Th.~3.14]{Rudin-realcomplexanalysis}) the continuous functions on $\sigma(A)$ are dense in $L^2(\sigma(A), \mug{A}{g})$. Combining the two statements above, the space of polynomials in both $z$ and $\bar{z}$ over $\sigma(A)$ is dense in $L^2(\sigma(A), \mug{A}{g})$.

Therefore, given any $f \in L^2(\sigma(A), \mug{A}{g})$ there exists a polynomial sequence $(q_n(z,\bar{z}))_{n \in \N}$ such that $q_n(z,\bar{z}) \xrightarrow[n \to \infty]{L^2} f(z)$. From the functional calculus
\[\Hnorm{q_n(A,A^*)g - f(A)g}^2 = \int_{\sigma(A)} |q_n(z,\bar{z}) - f(z)|^2 \,\ud \mug{A}{g}\,,\]
where the right side vanishes as $n \to \infty$. As $q_n(A,A^*)g \in \Kc{A}{g}$ for all $n \in \N$, we have $f(A)g \in \Kc{A}{g}$.

For any $v \in \Kc{A}{g}$ there exists a sequence of polynomials in $z$ only, $(p_n(z))_{n \in \N}$, such that $p_n(A)g \xrightarrow[n \to \infty]{\Hnorm{\,}} v$. $(p_n(A)g)_{n \in \N}$ is Cauchy in $\Hnorm{\,}$ so that
\[\Hnorm{p_n(A)g - p_m(A)g}^2 = \int_{\sigma(A)} |p_n(z) - p_m(z)|^2 \,\ud \mug{A}{g} = \norm{p_n - p_m}{L^2}^2\]
for any $m,n \in \N$, showing that $(p_n(z))_{n \in \N}$ is Cauchy in $L^2(\sigma(A),\mug{A}{g})$ as well. Thus $p_n$ converges to some $h \in L^2(\sigma(A), \mug{A}{g})$. The mapping in \eqref{eq:th_isomorphic} is therefore a linear bijection between the spaces $L^2(\sigma(A), \mug{A}{g})$ and $\Kc{A}{g}$. This concludes the proof.
\end{proof}

\begin{corollary}\label{cor:compactnormal_L2iso}
Let $A \in \Bop$ be compact and normal, and let $g \in \cH$. Then we have the isomorphism given in \eqref{eq:th_isomorphic}.
\end{corollary}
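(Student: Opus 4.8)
The plan is to recognise this corollary as an immediate consequence of combining Theorem~\ref{th:normalreduced_isomorphic} with Proposition~\ref{prop:compactnormal_Kreduced}, so that essentially no new work is required. Theorem~\ref{th:normalreduced_isomorphic} already furnishes the desired isomorphism $L^2(\sigma(A), \mug{A}{g}) \xrightarrow{\cong} \Kc{A}{g}$ for \emph{any} bounded normal $A$, under the single structural hypothesis that $A$ be $\K{A}{g}$-reduced. The entire task therefore reduces to verifying that this hypothesis holds in the present setting.

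First I would observe that a compact normal operator $A \in \Bop$ is in particular a bounded normal operator, so that the projection valued spectral measure $\specmeas{A}$, the associated scalar measure $\mug{A}{g}$, and the spectral functional calculus \eqref{eq:specfunc_calc} are all well-defined, and the conclusion of Theorem~\ref{th:normalreduced_isomorphic} is available to us as soon as its hypothesis is met. Second, I would invoke Proposition~\ref{prop:compactnormal_Kreduced}, which establishes precisely that every compact normal operator is $\K{A}{g}$-reduced for each $g \in \cH$. With both observations in hand, the hypotheses of Theorem~\ref{th:normalreduced_isomorphic} are satisfied, and applying that theorem directly yields the isomorphism \eqref{eq:th_isomorphic}, completing the proof.

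There is essentially no obstacle here, as all the substantive analysis has already been carried out in the preceding results: Theorem~\ref{th:normalreduced_isomorphic} supplies the isomorphism via the density of polynomials in $z$ and $\bar z$ in $L^2(\sigma(A), \mug{A}{g})$, while Proposition~\ref{prop:compactnormal_Kreduced} supplies the Krylov reducibility via the explicit description of the Krylov subspace from Proposition~\ref{prop:compactnormalKrylov}. The only point meriting a moment's care is confirming that the two results dovetail cleanly, namely that the reducibility hypothesis of the theorem is exactly the conclusion delivered by the proposition for the full class of compact normal operators and for arbitrary $g \in \cH$; once this alignment is noted, the corollary follows in a single line.
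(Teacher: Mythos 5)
Your proposal is correct and follows exactly the paper's own proof: invoke Proposition~\ref{prop:compactnormal_Kreduced} to conclude that $A$ is $\K{A}{g}$-reduced, then apply Theorem~\ref{th:normalreduced_isomorphic} to obtain the isomorphism \eqref{eq:th_isomorphic}. Nothing is missing; the extra remarks about the spectral measure being well-defined are harmless elaboration.
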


\begin{proof}
From Proposition \ref{prop:compactnormal_Kreduced} we know that $A$ is $\K{A}{g}$-reduced, and so the result follows by Theorem \ref{th:normalreduced_isomorphic}.
\end{proof}

\begin{remark}\label{rem:counterexample_normalreduced}
If $A$ is a normal operator, then it is not necessarily $\K{A}{g}$-reduced. The counter-example \cite[E.g.~3.8]{CMN-2018_Krylov-solvability-bdd} shows that the normality of an operator $A$ coupled with the Krylov solvability of the inverse linear problem is in general not enough to guarantee that $A$ is $\K{A}{g}$-reduced.
\end{remark}

Lastly, we state the converse to Theorem~\ref{th:normalreduced_isomorphic}.

\begin{theorem}\label{th:converse_normalreduced_isomorphic}
Let $A \in \Bop$ be a normal operator and $g \in \cH$. Suppose that there exists the isomorphism \eqref{eq:th_isomorphic}, \eqref{eq:specfunc_calc}. Then $A$ is $\K{A}{g}$-reduced.
\end{theorem}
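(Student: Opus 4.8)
The plan is to reduce the whole statement to a single membership assertion, $A^* g \in \Kc{A}{g}$, and then to close using the characterisation that was already invoked in the proof of Proposition~\ref{prop:compactnormal_Kreduced}: by \cite[Prop. 2.4]{CMN-2018_Krylov-solvability-bdd}, a bounded normal operator $A$ is $\K{A}{g}$-reduced \emph{if and only if} $A^* g \in \Kc{A}{g}$. Hence the entire argument amounts to extracting this one membership from the hypothesised isomorphism, after which the cited equivalence finishes the job immediately.

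First I would observe that the hypothesis is in fact much stronger than what is needed. I would use only that the map $f \mapsto f(A)g$ in \eqref{eq:th_isomorphic} is \emph{well-defined with codomain} $\Kc{A}{g}$, i.e.\ that $f(A)g \in \Kc{A}{g}$ for every $f \in L^2(\sigma(A), \mug{A}{g})$; neither the injectivity nor the surjectivity of the isomorphism plays any role. The key step is then to feed one carefully chosen test function into this map. Namely, I would take the conjugation function $c : z \mapsto \bar z$. Since $\sigma(A)$ is compact and $\mug{A}{g}$ is a finite positive measure of total mass $\mug{A}{g}(\sigma(A)) = \Hnorm{g}^2$, the continuous function $c$ belongs to $L^2(\sigma(A), \mug{A}{g})$. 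Applying the spectral functional calculus \eqref{eq:specfunc_calc} gives $c(A) = \int_{\sigma(A)} \bar z \, \ud \specmeas{A}(z) = A^*$, so that $A^* g = c(A)g \in \Kc{A}{g}$ by well-definedness of the map. Combining this with \cite[Prop. 2.4]{CMN-2018_Krylov-solvability-bdd} yields that $A$ is $\K{A}{g}$-reduced.

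The only genuinely delicate point, and the step I would check most carefully, is the identification $c(A) = A^*$ inside the calculus \eqref{eq:specfunc_calc}; although this is the standard fact that integrating the conjugate of the identity function against $\specmeas{A}$ reproduces the adjoint of a normal operator, it deserves to be stated explicitly so that the appearance of $A^*$ rather than $A$ is transparent and so that the membership of $c$ in $L^2(\sigma(A),\mug{A}{g})$ is properly justified. Everything else — the reduction to $A^* g \in \Kc{A}{g}$ via the cited equivalence — is routine, which is why I expect the proof to be short.
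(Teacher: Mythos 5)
Your proposal is correct and follows essentially the same route as the paper's proof: feed the conjugation function $z \mapsto \bar z$ into the map \eqref{eq:th_isomorphic}, identify $f(A) = A^*$ via \eqref{eq:specfunc_calc} to get $A^*g \in \Kc{A}{g}$, and conclude by \cite[Prop.~2.4]{CMN-2018_Krylov-solvability-bdd}. The only cosmetic difference is the justification that $\bar z \in L^2(\sigma(A),\mug{A}{g})$ --- you argue via compactness of $\sigma(A)$ and finiteness of the measure, while the paper computes the spectral integral $\int_{\sigma(A)} |\bar z|^2 \, \ud\mug{A}{g} = \Hnorm{Ag}^2 < +\infty$ directly; both are valid.
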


\begin{proof}
The function $f:\sigma(A) \to \C$, $z \mapsto \bar{z}$ is in $L^2(\sigma(A), \mug{A}{g})$. Indeed, by the spectral integral
\[\Hnorm{A^* g}^2 = \int_{\sigma(A)} |\bar{z}|^2\, \ud \mug{A}{g} = \int_{\sigma(A)} |z|^2\, \ud \mug{A}{g} = \Hnorm{Ag}^2 < + \infty\,.\]
By hypothesis there exists the isomorphism \eqref{eq:th_isomorphic}, \eqref{eq:specfunc_calc} that subsequently implies $A^*g \in \Kc{A}{g}$ as $f(A) = A^*$. According to \cite[Prop.~2.4]{CMN-2018_Krylov-solvability-bdd} this is equivalent to the $\K{A}{g}$-reducibility of $A$. 
\end{proof}

%\section{Acknowledgements}
%The author would like to thank Prof. A Michelangeli (University of Bonn) for fruitful discussions and suggestions.

\bibliographystyle{siam}
%\bibliography{bib_ARTICLES_master.bib}

\def\cprime{$'$}

“This version of the article has been accepted for publication, after peer review (when applicable)
but is not the Version of Record and does not reflect post-acceptance improvements, or any
corrections. The Version of Record is available online at: http://dx.doi.org/[insert DOI]”

\end{document}